\def\YEAR{\year}\newcount\VOL\VOL=\YEAR\advance\VOL by-1995
\def\firstpage{1}\def\lastpage{5}
\def\received{}\def\revised{}
\def\communicated{}
\def\magnification{\afterassignment\m@g\count@}
\def\m@g{\mag=\count@\hsize6.5truein\vsize8.9truein\dimen\footins8truein}
\font\eightrm=cmr8
\font\caps=cmcsc10                    
\font\Caps=cmcsc10 scaled \magstep1   
\def\bfseries{\normalsize\caps}
\def\DocMath{}
\renewcommand{\@evenhead}{%
    \ifnum\thepage>\lastpage\rlap{\thepage}\hfill%
    \else\rlap{\thepage}\slshape\leftmark\hfill{\caps\SAuthor}\hfill\fi}%
\renewcommand{\@oddhead}{%
    \ifnum\thepage=\firstpage{\DocMath\hfill\llap{\thepage}}%
    \else{\slshape\rightmark}\hfill{\caps\STitle}\hfill\llap{\thepage}\fi}%
\def\TSkip{\bigskip}
\newbox\TheTitle{\obeylines\gdef\GetTitle #1
\ShortTitle  #2
\SubTitle    #3
\Author      #4
\ShortAuthor #5
\EndTitle
{\setbox\TheTitle=\vbox{\baselineskip=20pt\let\par=\cr\obeylines%
\halign{\centerline{\Caps##}\cr\noalign{\medskip}\cr#1\cr}}%
    \copy\TheTitle\TSkip\TSkip%
\def\next{#2}\ifx\next\empty\gdef\STitle{#1}\else\gdef\STitle{#2}\fi%
\def\next{#3}\ifx\next\empty%
    \else\setbox\TheTitle=\vbox{\baselineskip=20pt\let\par=\cr\obeylines%
    \halign{\centerline{\caps##} #3\cr}}\copy\TheTitle\TSkip\TSkip\fi%
\centerline{\caps #4}\TSkip\TSkip%
\def\next{#5}\ifx\next\empty\gdef\SAuthor{#4}\else\gdef\SAuthor{#5}\fi%
\ifx\received\empty\relax
    \else\centerline{\eightrm Received: \received}\fi%
\ifx\revised\empty\TSkip%
    \else\centerline{\eightrm Revised: \revised}\TSkip\fi%
\ifx\communicated\empty\relax
    \else\centerline{\eightrm Communicated by \communicated}\fi\TSkip\TSkip%
\catcode'015=5}}\def\Title{\obeylines\GetTitle}
\def\Abstract{\begingroup\narrower
    \parskip=\medskipamount\parindent=0pt{\caps Abstract. }}
\def\EndAbstract{\par\endgroup\TSkip}
\long\def\MSC#1\EndMSC{\def\arg{#1}\ifx\arg\empty\relax\else
     {\par\narrower\noindent%
     2000 Mathematics Subject Classification: #1\par}\fi}
\long\def\KEY#1\EndKEY{\def\arg{#1}\ifx\arg\empty\relax\else
    {\par\narrower\noindent Keywords and Phrases: #1\par}\fi\TSkip}
\newbox\TheAdd\def\Addresses{\vfill\copy\TheAdd\vfill
    \ifodd\number\lastpage\vfill\eject\phantom{.}\vfill\eject\fi}
{\obeylines\gdef\GetAddress #1
\Address #2
\Address #3
\Address #4
\EndAddress
{\def\xs{4.3truecm}\parindent=0pt
\setbox0=\vtop{{\obeylines\hsize=\xs#1\par}}\def\next{#2}
\ifx\next\empty 
     \setbox\TheAdd=\hbox to\hsize{\hfill\copy0\hfill}
\else\setbox1=\vtop{{\obeylines\hsize=\xs#2\par}}\def\next{#3}
\ifx\next\empty 
     \setbox\TheAdd=\hbox to\hsize{\hfill\copy0\hfill\copy1\hfill}
\else\setbox2=\vtop{{\obeylines\hsize=\xs#3\par}}\def\next{#4}
\ifx\next\empty\ 
     \setbox\TheAdd=\vtop{\hbox to\hsize{\hfill\copy0\hfill\copy1\hfill}
                \vskip20pt\hbox to\hsize{\hfill\copy2\hfill}}
\else\setbox3=\vtop{{\obeylines\hsize=\xs#4\par}}
     \setbox\TheAdd=\vtop{\hbox to\hsize{\hfill\copy0\hfill\copy1\hfill}
            \vskip20pt\hbox to\hsize{\hfill\copy2\hfill\copy3\hfill}}
\fi\fi\fi\catcode'015=5}}\gdef\Address{\obeylines\GetAddress}
\newtheorem{thm}[subsection]{Theorem}
\newtheorem{cor}[subsection]{Corollary}
\newtheorem{lem}[subsection]{Lemma}
\newtheorem{fact}[subsection]{Fact}
\newtheorem{prop}[subsection]{Proposition}
\theoremstyle{definition}
\newtheorem{defn}[subsection]{Definition}
\newtheorem{clm}[subsection]{Claim}
\theoremstyle{remark}
\newtheorem{rem}[subsection]{Remark}
\numberwithin{equation}{subsection}
\newcommand{\Rmnum}[1]{\expandafter\@slowromancap\romannumeral #1@}
\renewenvironment{proof}[1][\proofname]{\par
    \pushQED{\qed}%
    \normalfont \topsep6\p@\@plus6\p@ \labelsep1em\relax
    \trivlist
    \item[\hskip\labelsep
        \bfseries #1]\ignorespaces
}{%
    \popQED\endtrivlist\@endpefalse
} \makeatother
\renewcommand{\proofname}
\begin{document}
\Title

         An improvement of de Jong--Oort's purity theorem
\ShortTitle

\SubTitle

\Author
        Yanhong Yang
\ShortAuthor
           Yanhong Yang
\EndTitle

\Abstract Consider an $F$-crystal over a noetherian scheme $S$. De
Jong--Oort's purity theorem states that the associated Newton
polygons over all points of $S$ are constant if this is true outside
a subset of codimension bigger than 1. In this paper we show an
improvement of the theorem, which says that the Newton polygons over
all points of $S$ have a common break point if this is true outside
a subset of codimension bigger than 1.

\EndAbstract

\MSC Primary 14F30; Secondary 11F80,14H30.
\EndMSC

\KEY F-crystal, Newton slope, Galois representation.
\EndKEY

\Address
    Yanhong Yang
    Department of Mathematics
    Columbia University
    New York
    NY 10027 (USA)
    yhyang@math.columbia.edu

\Address

\Address
\Address
\EndAddress
\section{introduction}

\indent De Jong--Oort's purity theorem \cite[Theorem
4.1]{dejong-oort} states that for an $F$-crystal over a noetherian
scheme $S$ of characteristic $p$ the associated Newton polygons over
all points of $S$ are constant if this is true outside a subset of
codimension bigger than 1. This theorem has been strengthened and
generalized by Vasiu \cite{Vasiu}, who has shown that each stratum
of the Newton polygon stratification defined by an $F$-crystal over
any reduced, not necessarily noetherian $\mathbb{F}_p$-scheme $S$ is
an affine $S$-scheme. In the case of a family of $p$-divisible
groups, alternative proofs of the purity have been given by Oort
\cite{Oort} and Zink \cite{Zink}. In this paper we show an
improvement, which implies that for an $F$-crystal over a noetherian
scheme $S$ the Newton polygons over all points have a common break
point if this is true outside a subset of codimension bigger than 1.
As to a stronger statement analogous to that in Vasiu's paper, our
method does not apply. The main result is the following theorem.

\begin{thm}

Let $S$ be a locally Noetherian scheme of characteristic $p$ and
$\mathcal{E}$ be an $F$-crystal over $S$. Fix $s\in S$. If there
exists an open neighborhood $U$ of $s$ in $S$ such that the Newton
polygons $NP(\mathcal{E})_x$ over all points $x\in U\backslash\{s\}$
have a common break point, then either codim$(\{s\}^-, U)\leq 1$ or
$NP(\mathcal{E})_s$ has the same break point.\label{mainresult}
\end{thm}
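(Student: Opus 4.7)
The plan is to reduce, via exterior powers and a Tate twist, to an extension problem for a rank-$1$ unit-root $F$-isocrystal across a codimension-$\geq 2$ subset. Set $\mathcal{F}:=\wedge^{h_0}\mathcal{E}$. The slopes of $\mathcal{F}_x$ are the $h_0$-fold sums of slopes of $\mathcal{E}_x$; in particular, the smallest slope of $\mathcal{F}_x$ equals the $y$-coordinate of $NP(\mathcal{E})_x$ at abscissa $h_0$, and has multiplicity $1$ precisely when $(h_0,d_0)$ is a vertex of $NP(\mathcal{E})_x$. Thus the hypothesis translates to: $NP(\mathcal{F})_x$ has a vertex at $(1,d_0)$ for every $x\in U\setminus\{s\}$. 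By Grothendieck's specialization theorem the smallest slope of $\mathcal{F}_s$ is $\geq d_0$, and it suffices to show equality: combined with $NP(\mathcal{E})_s\geq NP(\mathcal{E})_x$ (for any $x\in U\setminus\{s\}$ with $s\in\overline{\{x\}}$, where $NP(\mathcal{E})_x$ has a vertex at $(h_0,d_0)$), a short convex-analysis argument then forces $NP(\mathcal{E})_s$ to have a vertex at $(h_0,d_0)$ as well.

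Next, I would Tate-twist $\mathcal{F}$ by $(-d_0)$ (legitimate as $d_0\in\mathbb{Z}_{\geq 0}$ is a vertex coordinate) so that the smallest slope on $U\setminus\{s\}$ becomes $0$ with multiplicity $1$. The pointwise unit-root lines $\mathcal{U}_x\subset\mathcal{F}_x[1/p]$, characterized as the $\phi$-stable part of slope $0$, glue by their uniqueness into a rank-$1$ sub-$F$-isocrystal $\mathcal{U}\subset\mathcal{F}[1/p]|_{U\setminus\{s\}}$. Via Katz's equivalence of categories between unit-root $F$-isocrystals and lisse $\mathbb{Z}_p$-sheaves, $\mathcal{U}$ corresponds to a continuous character $\chi\colon \pi_1^{\text{\'et}}(U\setminus\{s\})\to\mathbb{Z}_p^{\times}$.

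Now assume codim$(\{s\}^-,U)\geq 2$ and, after normalizing $U$ if needed, apply Zariski--Nagata purity of the branch locus: every finite \'etale cover of $U\setminus\{s\}$ extends uniquely to one of $U$, so $\chi$ extends to $\pi_1^{\text{\'et}}(U)$, yielding a rank-$1$ unit-root $F$-isocrystal $\widetilde{\mathcal{U}}$ on $U$ restricting to $\mathcal{U}$. A Hartogs-type extension principle for coherent objects across codimension $\geq 2$ upgrades the inclusion $\mathcal{U}\hookrightarrow\mathcal{F}[1/p]$ on $U\setminus\{s\}$ to a non-zero morphism $\widetilde{\mathcal{U}}\to\mathcal{F}[1/p]$ on $U$; at $s$ this exhibits a slope-$0$ line in $\mathcal{F}_s[1/p]$, forcing the smallest slope of $\mathcal{F}_s$ to be $\leq 0$. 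Untwisting restores the equality at $s$, and the first paragraph closes the argument.

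The hard part will be the extension step, which requires careful bookkeeping on three fronts: (i) constructing $\mathcal{U}$ as a genuine sub-$F$-isocrystal on $U\setminus\{s\}$ rather than a mere pointwise family, (ii) invoking Katz's equivalence and Zariski--Nagata purity in the present (only locally Noetherian) generality, possibly via normalization, and (iii) verifying non-degeneracy of the extended morphism at the closed point $s$. Handling these technicalities is where the real work of the proof must lie.
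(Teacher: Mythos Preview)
Your overall strategy—taking $\wedge^{h_0}$ to reduce to a first break point of the form $(1,m)$, isolating a rank-$1$ slope-$m$ (or, after twisting, unit-root) sub-object, converting it into a $\mathbb{Z}_p^\times$-valued character, and trying to extend across $s$—matches the paper's. The gap is at the extension step.

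Zariski--Nagata purity gives $\pi_1(U\setminus\{s\})\simeq\pi_1(U)$ only when $U$ is \emph{regular} along $\{s\}$; normalizing does not achieve this. Nothing in the hypotheses prevents $s$ from being a normal singularity of codimension $\geq 2$, and then your appeal to Zariski--Nagata simply fails. This is precisely the obstacle that makes de Jong--Oort purity nontrivial in the first place, so it cannot be dismissed as bookkeeping. Your proposed Hartogs-type extension of the inclusion $\mathcal{U}\hookrightarrow\mathcal{F}[1/p]$ runs into the same wall (and additionally into the problem that the objects involved are $p$-adically completed, not coherent sheaves in the usual sense).

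The paper proceeds differently after the exterior-power reduction. It first reduces, following de Jong--Oort, to $S=\operatorname{Spec}A$ with $A$ a two-dimensional complete local normal domain with algebraically closed residue field, $s$ the closed point, $U=S\setminus\{s\}$. It establishes a DVR criterion (Proposition~4.6): for an $F$-crystal over a DVR, the first break point at the closed point is $(1,m)$ if and only if the associated rank-$1$ character is unramified. Applying this at every height-$1$ prime shows that the character $\rho\colon\mathrm{Gal}_K\to\mathbb{Z}_p^\times$ descends to $\pi_1(U)$. After a finite cover to force $\mathrm{Im}(\rho)\subset 1+p\mathbb{Z}_p$ and taking $\log$, one has $\iota\colon\pi_1(U)\to\mathbb{Z}_p$. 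Now one takes a resolution $\widetilde{S}\to S$ and invokes the main result of \cite[Section~3]{dejong-oort}—a surface-specific extension theorem for $\mathbb{Z}_p$-valued characters, using the negative-definite intersection form on the exceptional curves—to extend $\iota$ to $\pi_1(\widetilde{S})$. Finally, no Hartogs step is needed: one restricts to $\operatorname{Spec}\mathcal{O}_{\widetilde{S},\xi}$ for $\xi$ a generic point of the exceptional fiber, observes the character is unramified there, and applies Proposition~4.6 in reverse to get the break point $(1,m)$ at $\xi$; since $\xi\mapsto s$, the conclusion follows.

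In short, your sketch has the right architecture but substitutes Zariski--Nagata for the genuinely hard input, which is the de Jong--Oort extension theorem for $\mathbb{Z}_p$-characters on resolved surfaces, coupled with the two-way criterion of Proposition~4.6 that lets one read off Newton-polygon information directly from ramification data.
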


The following example explains how Theorem ~\ref{mainresult}
improves de Jong-Oort purity theorem. Look at $Figure\ 1$. Consider
the spectrum of some local Noetherian integral domain of dimension
$2$ and characteristic $p$. Then we ask: does there exist an
$F$-crystal such that the associated Newton polygon over the closed
point is $\xi$, over a finite number of points of codimension $1$ is
$\gamma$ and over each of all other points is $\eta$? Theorem
~\ref{mainresult} tells us that the answer is negative, while it
cannot be easily seen from \cite[Theorem 4.1]{dejong-oort}.

In our main theorem, the condition on ``one of the break points"
cannot be generalized to an arbitrary point of the Newton polygon
which is not a break point. Consider a family of elliptic curves $f:
\mathcal{X}\rightarrow S$, where $S$ is a curve over a field $k$ of
positive characteristic. Look at $Figure\ 2$. Assume that all the
fibers of $f$ are ordinary except over a closed point $0\in S$. Then
$Figure\ 2$ shows all the Newton polygons associated to the family
of abelian surfaces $\mathcal{X}\times_k\mathcal{X}\rightarrow
S\times_kS$. Namely, over the special point $(0,0)$ the associated
Newton polygon is $\xi$; over each point in $\{0\}\times S\cup
S\times \{0\}$, it is $\gamma'$; over each of all other points, it
is $\eta'$. We see that outside the one-point set $\{(0,0)\}$ of
codimension 2, the Newton polygons have a common point $P$.

\setlength{\unitlength}{1.2cm}
\begin{picture}(10,4)
\put(0,0.5){\line(1,0){3}} \put(3,0.5){\line(0,1){3}}
\put(0,0.5){\line(1,0){1}} \put(1,0.5){\line(1,1){1}}
\put(2,1.5){\line(1,2){1}} \put(1,0.5){\line(2,3){2}}
\put(0,0.5){\line(1,1){3}} \put(2.2,1.5){$\eta$}
\put(1.7,1.5){$\gamma$} \put(1.2,1.5){$\xi$}\put(1,0.2){Figure 1}
\put(5,0.5){\line(2,1){4}} \put(9,0.5){\line(0,1){2}}
\put(6,0.5){\line(2,1){2}} \put(8,1.5){\line(1,1){1}}
\put(7,0.5){\line(1,1){1}} \put(5,0.5){\line(1,0){4}}
\put(7,0.2){Figure 2} \put(6.4,1){$\xi'$} \put(7.3,1){$\gamma'$}
\put(7.8,1){$\eta'$} \put(7.9,1.6){$P$}
\end{picture}

This paper is organized as follows. In Section 2, we review some
facts about F-crystals before showing that if the Newton polygon of
an F-crystal over a field has a break point $(1,m)$, then there
exists a unique subcrystal of rank $1$ and slope $m$. In Section 3,
we describe the kernel of
$\text{Gal}(\overline{K}/K)\rightarrow\pi_1(X,\overline{\eta})$ as
the normal subgroup generated by local kernels (Proposition
~\ref{Kernel}) and particularly obtain another description when $X$
is the spectrum of a discrete valuation ring (Corollary
~\ref{reducekernel}). In Section 4, we define the Galois
representation associated to an F-crystal, and discuss the
relationship between its ramification property and Newton slopes
(see proposition ~\ref{Trivialrepr}). Section 5 contains the proof
of Theorem ~\ref{mainresult}. The proof essentially follows the
proof of \cite[Theorem 4.1]{dejong-oort}, yet is more accessible
because the relationship between the ramification property of the
representation and the Newton slopes has been clarified.

The author would like to thank her advisor, Aise Johan de Jong,
without whose patient and effective instruction this paper would not
have come into existence. Also the author owes a lot to the referees
who have made many suggestions and particularly pointed out a
significant improvement of the main theorem, which had originally
treated the first few break points of the Newton polygon instead of
any break point .

\section{Results on F-crystals}

\subsection{Conventions.} In this paper, $k$ always denotes a field of
characteristic $p$, where $p$ is a prime number; $\overline{k}$
denotes an algebraic closure of $k$; $S$ denotes a connected scheme
of characteristic $p$. We use the term $crystal$ to mean a crystal
of finite locally free $\mathcal{O}_{cris}-$modules. See \cite[Page
226]{Berthelot}. Here $\mathcal{O}_{cris}$ denotes the structure
sheaf on the category CRIS$(S/\text{Spec }\mathds{Z}_p)$ (big
crystalline site of $S$). If $T\rightarrow S$ is a morphism, then we
use $\mathcal{E}|_T$ to denote the pullback of $\mathcal{E}$ to
CRIS$(T/\text{Spec }\mathds{Z}_p)$. For a crystal $\mathcal{E}$, we
denote by $\mathcal{E}^{(n)}$ the pullback of $\mathcal{E}$ by the
$n$th iterate of the Frobenius endomorphism of $S$. An $F$-$crystal\
over\ S$ is a pair $(\mathcal{E}, F)$, where $\mathcal{E}$ is a
crystal over $S$ and $F:\mathcal{E}^{(1)}\rightarrow\mathcal{E}$ is
a morphism of crystals. We usually denote an $F$-crystal by
$\mathcal{E}$, with the map $F$ being understood. Recall that
$\mathcal{E}$ is a $nondegenerate\ F$-$crystal$ if the kernel and
cokernel of $F$ are annihilated by some power of $p$, see
\cite[3.1.1]{Rivano}. All $F$-crystals in this paper will be
nondegenerate.

A perfect scheme $S$ in characteristic $p$ is a scheme such that the
Frobenius map $(-)^p:\mathcal{O}_S\rightarrow\mathcal{O}_S$ is an
isomorphism. A crystal over a perfect scheme $S$ is simply given by
a finite locally free sheaf of $W(\mathcal{O}_S)$-modules (see
\cite[Page 141]{Katz}).

\subsection{} Suppose that $S=\text{Spec }k$. Choose
a Cohen ring $\Lambda$ for $k$, and let $\sigma:
\Lambda\rightarrow\Lambda$ be a lift of $Frobenius$ on $k$. By
\cite[Proposition 1.3.3]{Berthelot-Messing}, we know that an
$F$-crystal $\mathcal{E}$ over $k$ is given by a triple
$(M,\nabla,F)$ over $\Lambda$, where $M$ is a finite free
$\Lambda$-module of rank $r$, $\nabla$ is an integrable,
topologically quasi-nilpotent connection, and $F$ is a horizontal
$\sigma$-linear self-map of $M$. \label{modandcry}

\subsection{} Let $k^{pf}$ be the perfect closure of $k$. Note that under the
identification $k^{pf}=\varinjlim(k\rightarrow k\rightarrow\ldots)$,
and by \cite[Chapter II, Prop 10]{Serre}, we obtain
$$W(k^{pf})=p\text{-adic completion of }\varinjlim(\Lambda\xrightarrow{\sigma}\Lambda\xrightarrow{\sigma}\ldots).$$
Furthermore $\sigma$ can be extended to an endomorphism of
$W(k^{pf})$, which is a lift of Frobenius on $k^{pf}$, still denoted
by $\sigma$. Thus we get an injection $\Lambda\rightarrow W(k^{pf})$
compatible with $\sigma$. \label{cohenwitt}

The pullback $\mathcal{E}|_{\text{Spec}(k^{pf})}$ of $\mathcal{E}$
corresponds to the pair $(M\otimes_\Lambda
W(k^{pf}),F\otimes\sigma)$. According to \cite[1.3]{Katz}, we can
describe Newton slopes associated to
$\mathcal{E}|_{\text{Spec}(k^{pf})}$ as follows. Choose an algebraic
closure $\overline{k}$ of $k^{pf}$ and some positive integer $N$
divisible by $r!$. Consider the valuation ring
$R=W(\overline{k})[X]/(X^N-p)=W(\overline{k})[p^{1/N}]$ and denote
its fraction field by $K$. We extend $\sigma$ to an automorphism of
$R$ by requiring that $\sigma(X)=X$. Then by Dieudonn\'{e}( cf
\cite{Manin}), $M\otimes_{W(\overline{k})} K$ admits a $K$-basis
$e_1,\ldots,e_r$ such that $(F\otimes\sigma)(e_i)=p^{\lambda_i}e_i$
and $0\leq\lambda_1\leq\ldots\leq\lambda_r$. These $r$ rational
numbers are defined to be the \emph{Newton slopes} of $(M,F)$ or
$\mathcal{E}$.

For each $\lambda$, we define $\text{mult}(\lambda)$ as the number
of\ times  $\lambda$ occurs among $\{\lambda_1,\ldots,\lambda_r\}$.
By Dieudonn\'{e} again, the product $\lambda\
\text{mult}(\lambda)\in \mathds{Z}_{\geq0}$  for each $\lambda$. The
Newton Polygon of $(M,F)$ is a polygonal chain consisting of line
segments $S_1,\ldots,\ S_r$, where $S_i$ connects the two points
$(i-1,\lambda_1+\ldots+\lambda_{i-1})\ \text{and}\
(i,\lambda_1+\ldots+\lambda_i)$. The points at which the Newton
polygon changes slope are called \emph{break points}.

We now turn to an $F$-crystal $\mathcal{E}$ over an arbitrary
$\mathds{F}_p$-scheme $S$. For every point $s\in S$, let
$s:\text{Spec }k(s)\rightarrow S$ be the natural map. We can assign
to $s$ the Newton polygon associated with $\mathcal{E}|_{\text{ Spec
}k(s)}$, denoted by $NP(S,\mathcal{E})_s$ or $NP(\mathcal{E})_s$.

The following result about the existence of some special subcrystal
will be significant in proving the theorem.
\begin{prop}\label{subcrystal}
Let $(\mathcal{E},F)$ be a crystal over $S=\text{Spec}(k)$. If the
first break point of $NP(S,\mathcal{E})$ is $(1,m)$, where
$m\in\mathds{Z}_{\geq0}$, then it has a unique subcrystal
$\mathcal{E}_1\subset\mathcal{E}$ of rank $1$ and slope $m$.
\end{prop}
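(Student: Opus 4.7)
My plan is to separate uniqueness from existence, with both relying on the Dieudonn\'e--Manin classification over the algebraic closure.

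For uniqueness: if $\mathcal{E}_1$ and $\mathcal{E}_1'$ are both rank-$1$ subcrystals of slope $m$, their base changes to $M\otimes_\Lambda K$ (with $R=W(\overline{k})[p^{1/N}]$ and $K=R[1/p]$ as in 2.3) are both rank-$1$ $F$-stable $K$-subspaces of slope $m$. Since $m$ occurs with multiplicity one, Dieudonn\'e--Manin exhibits a unique such subspace, forcing the two candidates to agree after base change; faithfully flat descent along $\Lambda \hookrightarrow R$ then yields $\mathcal{E}_1 = \mathcal{E}_1'$.

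For existence, I would construct $M_1$ intrinsically by
\[
M_1 \;:=\; \bigcap_{n\geq 0}\; p^{-nm}\, F^n(M) \;\subseteq\; M,
\]
the $n=0$ term ensuring $M_1\subseteq M$. This is an $F$-stable $\Lambda$-submodule: for $x\in M_1$ one has $p^{nm}x\in F^n(M)$ for every $n\geq 0$, and since $\sigma(p)=p$ one computes $p^{nm}F(x)=F(p^{nm}x)\in F^{n+1}(M)$, hence $F(x)\in p^{-nm}F^{n+1}(M)\subseteq p^{-(n+1)m}F^{n+1}(M)$, so $F(x)\in M_1$. To compute the rank, base-change to $R$ and use the Dieudonn\'e--Manin basis $e_1,\ldots,e_r$ of $M\otimes_\Lambda K$ with $F(e_i)=p^{\lambda_i}e_i$ and $\lambda_1=m<\lambda_2\leq\cdots$: the operator $p^{-nm}F^n$ acts on the $e_i$-direction by $p^{n(\lambda_i-m)}$, which equals $1$ for $i=1$ and tends $p$-adically to $0$ for $i\geq 2$. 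Hence $M_1\otimes_\Lambda R$ collapses onto the rank-$1$ line $R\,e_1$, on which $F$ acts as $p^m$ times a unit, and faithfully flat descent gives that $M_1$ is locally free of rank $1$ over $\Lambda$ with $F$ of slope $m$.

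Finally, $M_1$ is a genuine subcrystal and not merely a sub-$F$-module. The defining formula uses only $F$, and since $F$ is horizontal with respect to $\nabla$ (built into the crystal structure), $\nabla$ preserves each $p^{-nm}F^n(M)$ up to tensoring with $\Omega^1_\Lambda$, and therefore preserves the intersection $M_1$. The main obstacle will be justifying that the intersection commutes with the base change $\Lambda\to R$: this needs the descending chain $\{p^{-nm}F^n(M)\}$ to stabilise $p$-adically in each finite quotient $M/p^N M$, an Artin--Rees-type statement that follows from Noetherianity of $\Lambda$ and finite generation of $M$.
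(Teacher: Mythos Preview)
Your strategy is close in spirit to the paper's, and two pieces of it are genuinely nice: the uniqueness argument is exactly right (multiplicity one of the slope $m$ forces agreement over $K$, then descend), and your $\nabla$-stability argument is cleaner than the paper's explicit computation --- once $M_1$ is defined purely through $F$, horizontality gives $\nabla(p^{-nm}F^n(M))\subseteq p^{-nm}F^n(M)\otimes\Omega_\Lambda$, and freeness of $\Omega_\Lambda$ lets you pass to the intersection.

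The existence argument, however, has two real gaps. First, the Dieudonn\'e--Manin basis $e_1,\ldots,e_r$ lives only in $M\otimes_\Lambda K$, not in the lattice $M\otimes_\Lambda R$. Saying that ``$p^{-nm}F^n$ acts on the $e_i$-direction by $p^{n(\lambda_i-m)}$'' tells you what happens on the $K$-vector space, where every $p^{-nm}F^n$ is already an isomorphism; it does not compute $p^{-nm}F^n(M\otimes R)$ as a lattice. To control the lattice you must know how $M\otimes_\Lambda W(\overline{k})$ sits relative to the slope decomposition, and that is precisely the content of Katz's isogeny theorem (his 2.6.1), which the paper invokes to get a model $N$ with $p^{-m}F_N$ integral and then pulls back the rank-one piece. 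Without this step your ``collapses onto $R e_1$'' is unjustified. Second, the commutation of your infinite intersection with the base change $\Lambda\to W(\overline{k})$ is not an Artin--Rees statement: the family $\{p^{-nm}F^n(M)\}_{n\ge0}$ is \emph{not} a descending chain in general (already for $M=W^2$ with $F=\begin{pmatrix}p&1\\0&p^2\end{pmatrix}$ one checks $p^{-2}F^2(M)\not\subseteq p^{-1}F(M)$), so Noetherianity alone does not give stabilisation modulo $p^N$.

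The paper handles both issues at once: it passes to $W(\overline{k})$, uses Katz's isogeny to produce maps $f^\nu=p^{D_\nu}(F/p^m)^\nu\colon M\to M$ that are nonzero modulo $p$, and shows that for $\nu$ large their images modulo $p^n$ land in a free rank-one summand. Working modulo each $p^n$ makes the intersections finite, so flat base change applies cleanly, and then one takes the inverse limit. Your intrinsic formula $M_1=\bigcap_n p^{-nm}F^n(M)$ is morally the same object, but turning it into a proof requires essentially these same ingredients.
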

\begin{proof}
$Proof$: Let $(M,\nabla, F)$ be the triple corresponding to the
 crystal $\mathcal{E}$ by ~\ref{modandcry}. Then the existence of the required subcrystal
is equivalent to the existence of a unique $\Lambda$-submodule $M_1$
of rank 1 and slope $m$, preserved by the action of $\nabla$. We
will first find a submodule of rank $1$ and slope $m$, then show it
is preserved by $\nabla$. The uniqueness of such a submodule follows
from the fact that the lowest slope is of multiplicity $1$.

Choose $\overline{k}\supset k^{pf}\supset k$. From ~\ref{cohenwitt},
we have a faithfully flat homomorphism $\Lambda\xrightarrow{i}
W(\overline{k})$. Let $\overline{M}=M\otimes_\Lambda
W(\overline{k})$. By \cite[Theorem 2.6.1]{Katz}, there is an isogeny
$\psi:\overline{M}\rightarrow N$, where $\frac{1}{p^m}F_N:
N\rightarrow N$ is a $\sigma$-linear self-map. By \cite[Theorem
1.6.1]{Katz}, $N$ has a unique free submodule $N_1$ of rank $1$ and
slope $m$ such that $N/N_1$ is free as a $W(\overline{k})$-module.
Let $\overline{M}_1=\psi^{-1}(N_1)$. It is clear that
$\overline{M}_1$ is a module of rank 1 and slope $m$, and
$\overline{M}_2=\overline{M}/\overline{M}_1$ is a free
$\Lambda$-module of rank $r-1$ and slopes $>m$.

Since $\psi$ is an isogeny, there exists some $D\in\mathds{Z}_{>0}$
such that $p^D\psi^{-1}(N)\subset\overline{M}$. As for every
$\nu>0$,
$p^D(\frac{F}{p^m})^\nu=p^D\psi^{-1}(\frac{F_N}{p^m})^\nu\psi$, thus
$p^D(\frac{F}{p^m})^\nu:\overline{M}\rightarrow \overline{M}$.
Actually we can choose $D_\nu\in [0,D]$ such that the matrix of
$\overline{f^\nu}=p^{D_\nu}(\frac{F}{p^m})^\nu$ mod $p$ does not
vanish. Let $f^\nu=p^{D_\nu}(\frac{F}{p^m})^\nu: M\rightarrow M$,
then $f^\nu$ mod $p$ does not vanish either. Since the Newton slopes
of $\overline{M}_2$ are greater than $m$, according to
\cite[1.4.3]{Katz}, for each $n>0$ there exists $c_n> 0$ such that
$\overline{f^\nu}(\overline{M}_2)\subset p^n\overline{M}_2$ for all
$\nu\geq c_n$.  Let $\overline{f^\nu_n}:
\overline{M}/p^n\overline{M}\rightarrow
\overline{M}/p^n\overline{M}$. Then
$\text{Im}(\overline{f^\nu_n})\subset\overline{M}_1/p^n\overline{M}_1$.
Let $\overline{E_n^\nu}=<\text{Im}(\overline{f^\nu_n})>$.  Note that
$<G>$ is denoted as the smallest $R/p^nR$-submodule of $M$
containing $G$, where $R$ is a discrete valuation ring with $p$ as
its uniformizer, $M$ is a finite free $R/p^nR$-module and $G\subset
M$ a subset.

Let $f^\nu_n: M/p^nM\rightarrow M/p^nM$, $E_n^\nu=<\text{Im}
(f^\nu_n)>$,  and $E_n=\cap_{\nu\geq c_n}E_n^\nu$. We get $
\overline{E_n^\nu}=E_n^\nu\otimes_\Lambda W(\overline{k})$, and
$\overline{E}_n=E_n\otimes_\Lambda W(\overline{k})=\cap_{\nu\geq
c_n}\overline{E_n^\nu}$. By the above argument, when $\nu\geq c_n$,
$\overline{E_n^\nu}\simeq\overline{M}_1/p^n\overline{M}_1$, a free
$W(\overline{k})/p^nW(\overline{k})$-module of rank $1$.

As $\Lambda\xrightarrow{i} W(\overline{k})$ is faithfully flat and
$\overline{E_n^\nu}=E_n^\nu\otimes_\Lambda W(\overline{k})$ is a
free $W(\overline{k})/p^nW(\overline{k})$-module of rank $1$ for
$\nu\geq c_n$, then $E_n^\nu$ is a free $\Lambda/p^n\Lambda$-module
of rank 1, hence so is $E_n$. Also the surjectivity of
$\overline{E}_{n+1}\rightarrow\overline{E}_n$ implies the
surjectivity of $E_{n+1}\rightarrow E_n$. Let $M_1=\varprojlim_{n>0}
E_n$, it is easy to see that $M_1$ is a free $\Lambda$-module. Since
$M_1\otimes_\Lambda W(\overline{k})=\overline{M_1}$ has slope $m$,
so does $M_1$.

Now we show $\nabla(M_1)\subset M_1\otimes_\Lambda\Omega_\Lambda$.
Here $\Omega_\Lambda = \varprojlim_n
\Omega^1_{(\Lambda/p^n\Lambda)/\mathds{Z}}$ is the $p$-adic module
of differentials. Let $\{e_1, ... , e_r\}$ be a basis of $M$ and
$e_1\in M_1$. Suppose $\nabla(e_1)=\sum_{i=1}^re_i\otimes\eta^i$. We
need to show $\eta^i=0$ for $i>1$. As $F^{\nu}$ is a horizontal
$\sigma^\nu$-linear self map for $\nu>0$, it exchanges with $\nabla$
in the following sense: $\widetilde{F^\nu}\circ\nabla=\nabla\circ
F^\nu$, where $\widetilde{F^\nu}=F^\nu\otimes\widetilde{\sigma^\nu}$
is the endomorphism of $M\otimes_\Lambda \Omega_{\Lambda}$ and
$\widetilde{\sigma^\nu}$ is the map $\Omega_{\Lambda}\rightarrow
\Omega_{\Lambda}$ given by $\alpha d\beta\mapsto
\sigma^\nu(\alpha)d\sigma^\nu(\beta)$. Then from
$\widetilde{F^\nu}\circ\nabla(e_1)=\nabla\circ F^\nu(e_1)$ we deduce
that
$$p^{m\nu}\mu_\nu\sum_{i>1}e_i\otimes\eta^i
=\sum_{i>1}F^\nu(e_i)\otimes\widetilde{\sigma^\nu}(\eta^i) \text{
mod } M_1\otimes\Omega_{\Lambda},$$ where $\mu_\nu\in\Lambda^*$. By
\cite[1.4.3]{Katz}, $F^\nu(M/M_1)\subset p^{m\nu+1}(M/M_1)$ for
$\nu>>0$. By comparing terms before $e_i$ in the above equation, we
get $\eta^i\in p\Omega_{\Lambda}$. Replace $\eta^i$ by
$p\widetilde{\eta^i}$ on the right side, then we get $\eta^i\in
p^2\Omega_{\Lambda}$. By repeating, $\eta^i\in p^n\Omega_{\Lambda}$
for every $n$. By \cite[1.3.1 Proposition]{Berthelot-Messing},
$\Omega_{\Lambda}$ is a free $\Lambda$-module. Then $\eta^i=0$ for
$i>1$. Hence $M_1$ is preserved by $\nabla$.
\end{proof}

\begin{rem}
The proposition can be generalized in the following way: Let
$(\mathcal{E},F)$ be a crystal over $S=\text{Spec}(k)$. If the first
break point of $NP(S,\mathcal{E})$ is $(\mu_1,\mu_1\lambda_1)$,
where $\lambda_1$ is the lowest Newton slope and $\mu_1$ is its
multiplicity, then there is a unique sub-crystal
$\mathcal{E}'\subset\mathcal{E}$ of rank $\mu_1$ with its Newton
slopes all equal to $\lambda_1$.

Applying the lemma to $(\wedge^{\mu_1}\mathcal{E},\wedge^{\mu_1}F)$,
we obtain a subcrystal $\mathcal{E}_1$ of
$\wedge^{\mu_1}\mathcal{E}$. To see that $\mathcal{E}_1$ is of the
form $\wedge^{\mu_1}\mathcal{E}'$ for some subcrystal
$\mathcal{E}'\subset\mathcal{E}$, we need to use the Pl\"{u}cker
coordinate and check if $\mathcal{E}_1$ satisfies the Pl\"{u}cker
equations. By extending the scalars to the fraction field $K$ of
$W(\overline{k})[X]/(X^N-p)$ for some proper $N$, we obtain that
$\mathcal{E}\otimes K$ admits a $K$-basis over which the matrix of
$F$ is diagonalized, hence the unique subcrystal
$\mathcal{E}_1\otimes K$ of rank 1 and slope m satisfies the
Pl\"{u}cker equations, and so does $\mathcal{E}_1$.

\end{rem}

\section{Facts about Fundamental Groups}

\subsection{} Let $X$ be a noetherian normal integral scheme with
its generic point $\eta$. Let $\overline{\eta}$ be a geometric point
over $\eta$. By \cite[Expos\'{e} V, Proposition 8.2]{Grothendieck},
the canonical map
$\phi:\text{Gal}(\overline{K}/K)\rightarrow\pi_1(X,\overline{\eta})$
is surjective, and the kernel is $\text{Gal}(\overline{K}/M)$, where
$\overline{K}$ is some algebraic closure of the fraction field $K$
of $X$ and $M$ is the union of all finite subextensions $K\subset
L(\subset\overline{K})$ such that $L$ is unramified over $X$, which
means that the normalization of $X$ in $L$ is unramified over $X$.
\label{kerofgalois}

This section focuses on describing the kernel of $\phi$ in terms of
local kernels. Assume that the completion
$\widehat{\mathcal{O}}_{X,x}$ of the local ring $\mathcal{O}_{X,x}$
at every point $x$ is an integral domain.  Denote the fraction field
and residue field of $\widehat{\mathcal{O}}_{X,x}$ by $K_x$ or
$k(x)$ respectively. Let $\overline{K}_x$ be an algebraic closure of
$K_x$ and $\overline{\eta}_x$ be the geometric point defined by
$\text{Spec }\overline{K}_x\rightarrow\text{Spec
}\widehat{\mathcal{O}}_{X,x}$. Fix some injection
$\omega:\overline{K}\rightarrow\overline{K}_x$ such that we have the
following commutative diagram:
$$\xymatrix{K\ar[r]\ar[d]&\overline{K}\ar[d]^\omega \\K_x \ar[r]
&\overline{K}_x }$$

Thus we have maps of Galois groups depending on $\omega$:
$$\psi_x: \text{Gal}(\overline{K}_x/K_x)\rightarrow
\text{Gal}(\overline{K}/K),\ \ \phi_x:
\text{Gal}(\overline{K}_x/K_x)\rightarrow\pi_1(\text{Spec
}\widehat{\mathcal{O}}_{X,x},\overline{\eta}_x)$$

\begin{prop}
Let $X$ be a noetherian normal integral scheme with $K$ as its
function field. Let $\phi,\ \phi_x$ and $\psi_x$ be the same as
above. If assuming that the completion $\widehat{\mathcal{O}}_{X,x}$
of the local ring $\mathcal{O}_{X,x}$ at each closed point $x\in X$
is a normal domain and that the same condition holds for the
normalization of $X$ in every finite separable extension $L/K$, then
$\text{Ker }\phi=H$, where $H$ is the normal closed subgroup of
$\text{Gal}(\overline{K}/K)$ generated by $\{\psi_x(\text{Ker
}\phi_x)|\ x\text{ is a closed point of } X\}$.\label{Kernel}
\end{prop}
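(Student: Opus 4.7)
The plan is to establish the two inclusions $H \subseteq \text{Ker }\phi$ and $\text{Ker }\phi \subseteq H$ separately, using the description $\text{Ker }\phi = \text{Gal}(\overline{K}/M)$ from \ref{kerofgalois} to translate membership of an element of $\text{Gal}(\overline{K}/K)$ into a statement about which finite extensions $L/K$ unramified over $X$ it fixes.

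For the inclusion $H \subseteq \text{Ker }\phi$, I would pick $\sigma \in \text{Ker }\phi_x$ for a closed point $x$ and show $\psi_x(\sigma)$ fixes every finite $L/K$ unramified over $X$. Write $A = \mathcal{O}_{X,x}$, $\widehat{A} = \widehat{\mathcal{O}}_{X,x}$, and let $B$ denote the semilocal ring of the normalization $Y$ of $X$ in $L$ at the fibre over $x$. Since $A \to B$ is finite étale, base change gives $B \otimes_A \widehat{A} = \prod_j \widehat{\mathcal{O}}_{Y,y_j}$, a product of local normal-domain factors each finite étale over $\widehat{A}$, with fraction fields $L_j$ unramified over $K_x$. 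The fixed embedding $\omega$ picks out one factor so that $L_1 = \omega(L)\cdot K_x$; since $\sigma$ fixes every finite unramified extension of $K_x$ inside $\overline{K}_x$, it fixes $L_1$ and therefore $\omega(L)$, so $\psi_x(\sigma)$ fixes $L$. As $L$ was arbitrary, $\psi_x(\sigma) \in \text{Ker }\phi$; since $\text{Ker }\phi$ is closed and normal, it contains $H$.

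For the reverse inclusion, I would set $N = \overline{K}^H$, note $N/K$ is Galois (as $H$ is normal), and show every finite subextension $L \subset N$ is unramified over $X$; after replacing $L$ by its Galois closure (still inside $N$) I may assume $L/K$ is Galois with normalization $Y \to X$. Because the non-étale locus of $Y \to X$ is closed, every nonempty closed subset of the noetherian space $Y$ contains a closed point, and finite morphisms send closed points to closed points, it suffices to verify étaleness at each closed point $y_j$ of $Y$ above each closed point $x$ of $X$. Using the hypothesis that $\widehat{A}$ and each $\widehat{\mathcal{O}}_{Y,y_j}$ is a normal domain, $B \otimes_A \widehat{A} = \prod_j \widehat{\mathcal{O}}_{Y,y_j}$ is a product of local normal domains with fraction fields $L_j/K_x$. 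The assumption $L \subset N$ combined with the normality of $H$ forces $\psi_x^{\omega'}(\text{Ker }\phi_x)$ to fix $L$ for \emph{every} embedding $\omega'$, and consequently $\omega'(L) \subset K_x^{ur}$, so every $L_j/K_x$ is unramified. As $\widehat{\mathcal{O}}_{Y,y_j}$ is the integral closure of $\widehat{A}$ in $L_j$ (it is a normal domain by hypothesis), it is étale over $\widehat{A}$; faithful flatness of completion then transfers étaleness to $\mathcal{O}_{X,x} \to \mathcal{O}_{Y,y_j}$, so $Y \to X$ is étale and $L$ is unramified over $X$.

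The main technical obstacle will be in the second direction: controlling ramification at every prime $y_j$ above $x$, not just the one singled out by the fixed $\omega$. The key observation is that different embeddings $\omega' = \omega \circ \tau$ yield conjugate subgroups $\psi_x^{\omega'}(\text{Ker }\phi_x) = \tau^{-1}\psi_x^{\omega}(\text{Ker }\phi_x)\tau$, so the normality of $H$ automatically captures the whole conjugacy class and allows the unramified-at-one-prime argument to propagate to every prime $y_j$ above $x$. Pinning this down rigorously, together with the identification of $\widehat{\mathcal{O}}_{Y,y_j}$ with the normal integral closure of $\widehat{A}$ in $L_j$ (which is exactly what the completion hypothesis buys us), is the heart of the proof.
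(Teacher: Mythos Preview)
Your proposal is correct and follows essentially the same approach as the paper. The paper organizes the argument by characterizing $L\subset M$ via condition~(\ref{globalunr}) and $L\subset N=\overline{K}^H$ via condition~(\ref{localunr}), then invokes transitivity of the Galois action on primes above $x$ (\S\ref{transitive}) to make the two conditions coincide for Galois $L$; your conjugation-of-embeddings argument $\psi_x^{\omega\circ\tau}(\text{Ker}\,\phi_x)=\tau^{-1}\psi_x^\omega(\text{Ker}\,\phi_x)\tau\subset H$ is exactly the same transitivity statement rephrased on the Galois side.
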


Note that if moreover $X$ is an excellent scheme, the conditions on
the local ring $\mathcal{O}_{X,x}$ at every closed point $x\in X$
are satisfied. In the following, let $x\in X$ be a closed point and
$L/K$ be a finite separable subextension in $\overline{K}$ if no
other description is given.

\subsection{}Let $\widetilde{X}$ be the
normalization of X  and $\widetilde{\mathcal{O}}_{X,x}$ be the
integral closure of $\mathcal{O}_{X,x}$ in $L$. By \cite[Chapter I,
Proposition 8]{Serre}, $\widetilde{X}\rightarrow X$ is a finite
morphism, and $\widetilde{\mathcal{O}}_{X,x}$ is a finitely
generated $\mathcal{O}_{X,x}$-module. Let $\{x_i\in\widetilde{X},\
i\in I\}$ be the set of points over $x$. Since
$\widetilde{\mathcal{O}}_{X,x}$ is a semilocal ring and a finite
$\mathcal{O}_{X,x}$-module,  then by \cite[Chapter I, Theorem
4.2]{Milne},
$\widetilde{\mathcal{O}}_{X,x}\otimes_{\mathcal{O}_{X,x}}\widehat{\mathcal{O}}_{X,x}=\prod_{i\in
I}\widehat{\mathcal{O}}_{\widetilde{X},x_i}$, where
$\widehat{\mathcal{O}}_{\widetilde{X},x_i}$ is the completion of the
local ring of $x_i\in\widetilde{X}$, and
$\widehat{\mathcal{O}}_{\widetilde{X},x_i}$ is a finite
$\widehat{\mathcal{O}}_{X,x}$-algebra. Thus we have the following
cartesian diagram:

$$\xymatrix{\text{Spec }\widehat{\mathcal{O}}_{X,x}\ar[d]&
\text{Spec }\prod_{i\in I}\widehat{\mathcal{O}}_{\widetilde{X},x_i}
\ar[l] \ar[d]
&\text{Spec }\widehat{\mathcal{O}}_{\widetilde{X},x_i} \ar[d]\ar[l]\\
\text{Spec }\mathcal{O}_{X,x} & \text{Spec
}\widetilde{\mathcal{O}}_{X,x}\ar[l] & \text{Spec
}\mathcal{O}_{\widetilde{X},x_i} \ar[l]}$$\label{diagram}

\subsection{}By \cite[Chapter I, Proposition 3.5]{Milne}, $L$ is
unramified over $X$ if and only if $\Omega_{\widetilde{X}/X}^1=0$.
As the branch locus where $\Omega_{\widetilde{X}/X}^1\neq0$ is a
closed subset, and $\Omega^1$ behaves well with respect to base
change, then $L$  is unramified over $X$ if and only if
 $$\Omega^1_{
\text{Spec }\mathcal{O}_{\widetilde{X},x_i}/\text{Spec
}\mathcal{O}_{X,x}}=0, \text{ for every closed point
}x_i\in\widetilde{X} \text{ over } x\in X.$$ As
$\widehat{\mathcal{O}}_{\widetilde{X},x_i}$ is a faithfully flat
$\mathcal{O}_{\widetilde{X},x}$-module, this is also equivalent to

 \begin{equation}
 \Omega^1_{
\text{Spec }\widehat{\mathcal{O}}_{\widetilde{X},x_i}/\text{Spec
}\widehat{\mathcal{O}}_{X,x}}=0, \text{ for every closed point
}x_i\in\widetilde{X} \text{ over } x\in X.\label{globalunr}
\end{equation}

\subsection{}
Let $L. K_x=\omega(L)K_x$. It is clear that $L.K_x$ is a separable
extension of $K_x$. Actually $L.K_x$ is the fraction field of
$\widehat{\mathcal{O}}_{\widetilde{X},x_i}$ for some $i$. Since by
base change the diagram in ~\ref{diagram} from $\text{Spec
}\widehat{\mathcal{O}}_{X,x}\rightarrow\text{Spec
}\mathcal{O}_{X,x}$ to $\text{Spec } K_x\rightarrow\text{Spec }K$,
we have $L\otimes_KK_x\simeq\prod_{i\in
I}Frac(\widehat{\mathcal{O}}_{\widetilde{X},x_i})$; by choosing
$\omega$, one has to choose $L\rightarrow
Frac(\widehat{\mathcal{O}}_{\widetilde{X},x_i})$. Hence $L/K$ is
fixed by $H$ if and only if for every closed point $x\in X$, there
exists some $x_i$ over $x$ such that
$Frac(\widehat{\mathcal{O}}_{\widetilde{X},x_i})$ is unramified over
$\text{Spec }\widehat{\mathcal{O}}_{X,x}$; by assumption this is
equivalent to $\text{Spec
}\widehat{\mathcal{O}}_{\widetilde{X},x_i}\rightarrow \text{Spec
}\widehat{\mathcal{O}}_{X,x}$ being unramified. Hence $L/K$ is fixed
by $H$ if and only if
\begin{equation}
\Omega^1_{ \text{Spec
}\widehat{\mathcal{O}}_{\widetilde{X},x_i}/\text{Spec
}\widehat{\mathcal{O}}_{X,x}}=0, \text{ for every closed point }x\in
X, \text{ some }x_i\in\widetilde{X} \text{ over } x.\label{localunr}
\end{equation}

\subsection{} Assume that $L/K$ is a finite Galois extension. Let $U\subset
X$ be an affine neighborhood of $x$ and $\widetilde{U}$ be its
normalization in $L$. By \cite[Chapter 2, 5.E]{Matsumura}, for two
given points $x_i,x_j\in\widetilde{U}$ over $x\in U$, there exists a
$U$-automorphism of $\widetilde{U}$ mapping $x_i$ to $x_j$, hence
$\Omega^1_{ \text{Spec }\mathcal{O}_{\widetilde{U},x_i}/\text{Spec
}\mathcal{O}_{U,x}}\simeq \Omega^1_{ \text{Spec
}\mathcal{O}_{\widetilde{U},x_j}/\text{Spec }\mathcal{O}_{U,x}}$. It
follows that $\widetilde{X}\rightarrow X$ is unramified at some
point over $x$ if and only if it is unramified at every point over
$x$.\label{transitive}

\begin{proof}
Proof of Proposition~\ref{Kernel}:  Let $N$ be the subfield of
$\overline{K}$  fixed by $H$. Since both $H$ and $\text{Ker }\psi$
are normal subgroups, then it suffices to show that $N=M$. Assume
$L/K$ is a finite Galois subextension. From discussions in
~\ref{transitive}, the conditions ~\ref{globalunr} and
~\ref{localunr} are equivalent. Then we have $L\subset
M\Leftrightarrow L\subset N$, hence $M=N$.
\end{proof}

We will also need the following facts about Galois groups.

\begin{clm}
Let $(K,\nu)$ be a henselian field with a nonarchimedean valuation
$\nu$. Let $(K_\nu,\nu)$ be its completion. Denote by
$\overline{K}$(resp. $\overline{K}_{\nu}$) the algebraic closure of
$K$(resp. $K_\nu$). Then the homomorphism $\text{Gal
}(\overline{K}_{\nu}/K_{\nu})\rightarrow \text{Gal
}(\overline{K}/K)$ is surjective.\label{surjective}

\end{clm}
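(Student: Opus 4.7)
The plan is to fix a compatible embedding $\overline{K}\hookrightarrow\overline{K}_\nu$ extending $K\hookrightarrow K_\nu$ (such an embedding exists because $\overline{K}_\nu$ contains some algebraic closure of $K$), so that the map of Galois groups becomes the restriction $\sigma\mapsto\sigma|_{\overline{K}}$. Since these Galois groups are profinite and the restriction is continuous, it will suffice to show that for every finite Galois subextension $L/K$ inside $\overline{K}$, the induced restriction map $\text{Gal}(L\cdot K_\nu/K_\nu)\to\text{Gal}(L/K)$ is surjective; the full statement then follows by passing to the inverse limit over all such $L$.

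On each finite Galois layer the restriction map is automatically injective, so its surjectivity is equivalent to the degree equality $[L\cdot K_\nu:K_\nu]=[L:K]$. I would obtain this equality by analyzing the finite $K_\nu$-algebra $L\otimes_K K_\nu$. The multiplication map $L\otimes_K K_\nu\to L\cdot K_\nu$ is surjective, and its source has $K_\nu$-dimension exactly $[L:K]$; hence it is enough to show that $L\otimes_K K_\nu$ is a field, for then the multiplication map is forced to be an isomorphism and the degree equality follows.

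To show $L\otimes_K K_\nu$ is a field, I would invoke the standard isomorphism $L\otimes_K K_\nu\cong\prod_{\nu'}L_{\nu'}$, where $\nu'$ ranges over the extensions of $\nu$ to $L$ and $L_{\nu'}$ denotes the completion of $L$ with respect to $\nu'$. Since $K$ is henselian, $\nu$ has a unique extension to the finite separable extension $L$, so the product has only one factor and $L\otimes_K K_\nu$ is the single field $L_{\nu'}$.

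The main obstacle is precisely this last identification of $L\otimes_K K_\nu$ with a field: it is the only place where the henselian hypothesis is essential, and everything else is routine Galois-theoretic bookkeeping together with an inverse-limit argument. Minor care is required in positive characteristic, where $\overline{K}$ should be read as the separable closure when defining the Galois group, but this causes no trouble since only finite separable subextensions $L/K$ appear in the argument.
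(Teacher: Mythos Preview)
The paper states this claim without proof, treating it as a known fact (Neukirch is cited shortly afterward, and this result is standard for henselian valued fields). So there is no argument in the paper to compare your proposal against.

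Your proof is correct and is the standard one. Reducing to finite Galois layers $L/K$ and then using the decomposition $L\otimes_K K_\nu\cong\prod_{\nu'\mid\nu}L_{\nu'}$ together with the uniqueness of the extension $\nu'$ (the henselian hypothesis) gives $[LK_\nu:K_\nu]=[L:K]$, hence bijectivity of the restriction on each layer; surjectivity at the profinite level follows because a continuous homomorphism of profinite groups with dense image is surjective. Your remark about reading $\overline{K}$ as the separable closure in positive characteristic is also apt, and your argument only uses finite separable subextensions, so no adjustment is needed.
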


\begin{fact}
Let $l/k$ be an algebraic extension. Let $K=k((t))$,
$\widehat{L}=l((t))$, and $L=\bigcup m((t))$ where $m$ runs over all
finite subextensions of $k$ in $l$. There is an obvious valuation
$\nu$ on $L$ and $\widehat{L}$ by sending $t^n$ to $n$. It is clear
that $\widehat{L}$ is the completion of $L$. As the valuation ring
of $L$ is $R=\bigcup m[[t]]$, from Definition(6.1) in \cite[Chapter
II]{Neukirch}, $L$ is a henselian field. Thus
$\text{Gal}_{\widehat{L}}\rightarrow\text{Gal}_{L}$ is surjective.

\label{closure}
\end{fact}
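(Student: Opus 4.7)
The plan is to verify the three claims embedded in the statement---that $R=\bigcup m[[t]]$ is the valuation ring of $L$, that $\widehat L=l((t))$ is the completion of $L$, and that $L$ is henselian---and then apply Claim~\ref{surjective} directly.

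First I would identify the valuation ring. Any $f\in L$ lies in some $m((t))$ with $m/k$ finite, so $f=\sum_{n\geq -N}a_n t^n$ with $a_n\in m$; the condition $\nu(f)\geq 0$ forces $N\leq 0$, placing $f$ in $m[[t]]\subset R$. Conversely every element of $R$ already has $\nu\geq 0$, so $R$ is exactly the valuation ring of $L$, and its maximal ideal is $tR$ with residue field $\bigcup m=l$. Next I would check the completion claim: every $g=\sum_{n\geq -N}a_n t^n\in l((t))$ is the $\nu$-adic limit of its truncations $g_M=\sum_{n=-N}^{M}a_n t^n$, and each $g_M$ involves only finitely many $a_n\in l$, which all lie in a single finite subextension $m\subset l$, so $g_M\in m((t))\subset L$. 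Since $l((t))$ is $t$-adically complete, this identifies $\widehat L$ with the completion of $L$.

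The substantive step is verifying that $(L,\nu)$ is henselian. Using the form of Hensel's lemma in \cite[Chapter II, Definition 6.1]{Neukirch}, it suffices to lift, for any monic $f\in R[X]$, every factorization of its reduction $\overline f\in l[X]$ into coprime monics $\overline f=\overline{g_0}\,\overline{h_0}$ to a factorization of $f$ over $R$. Because $f$ has finitely many coefficients in $R$ and $\overline{g_0},\overline{h_0}$ have finitely many coefficients in $l$, all of these lie in a single finite subextension $m\subset l$, so $f\in m[[t]][X]$ and the modular factorization lives in $m[X]$. The complete discrete valuation ring $m[[t]]$ is henselian, so the factorization lifts there and a fortiori over $R$.

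With $L$ shown to be henselian and $\widehat L$ its completion, Claim~\ref{surjective} applies directly to give the desired surjection $\text{Gal}(\overline{\widehat L}/\widehat L)\to\text{Gal}(\overline L/L)$. The main (minor) obstacle is the henselian verification: one must pick $m$ large enough to contain simultaneously the coefficients of $f$, of $\overline{g_0}$, and of $\overline{h_0}$, which is possible precisely because $l/k$ is the filtered union of its finite subextensions.
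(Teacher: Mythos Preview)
Your proposal is correct and follows exactly the route the paper intends: the paper embeds its entire argument in the statement itself---asserting $R=\bigcup m[[t]]$ is the valuation ring, citing \cite[Chapter II, Definition (6.1)]{Neukirch} to conclude $L$ is henselian, and then invoking Claim~\ref{surjective}---and you have simply supplied the details, in particular the observation that any monic $f\in R[X]$ together with a modular factorization descends to some $m[[t]]$ where Hensel's lemma holds because $m[[t]]$ is complete. There is nothing to add.
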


\begin{cor}

Let $R$ be a discrete valuation ring of characteristic $p$ with
fraction field $K$ and residue field $k$. let $s\in\text{Spec }R$ be
the closed point and $\overline{\eta}$ be a geometric point over the
generic point. Let $R_s$ be the completion of $R$, which is of the
form $k[[t]]$. Then the kernel of the canonical homomorphism
$\xymatrix{\text{Gal}_K\ar[r]^\phi&\pi_1(\text{Spec }R,
\overline{\eta})}$ is the normal subgroup of $\text{Gal}_K$
generated by the image of the composition
$\xymatrix{\text{Gal}_{\overline{k}((t))}\ar[r]&\text{Gal}_{k((t))}\ar[r]^{\psi_s}
& \text{Gal}_K}$. \label{reducekernel}
\end{cor}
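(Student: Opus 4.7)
The plan is to combine Proposition~\ref{Kernel} with Fact~\ref{closure} and Claim~\ref{surjective}. Since $R$ is a DVR, $\text{Spec}\,R$ has a single closed point $s$, and the hypotheses of Proposition~\ref{Kernel} are immediate: $R_s\simeq k[[t]]$ is a complete DVR (hence normal), and for any finite separable $L/K$ the normalization of $R$ in $L$ is a semilocal Dedekind domain whose localizations have complete DVR completions. Proposition~\ref{Kernel} thus reduces the corollary to showing that the image of the map $\text{Gal}_{\overline{k}((t))}\to\text{Gal}_{k((t))}$ equals $\text{Ker}\,\phi_s$, where $\phi_s:\text{Gal}_{K_s}\to\pi_1(\text{Spec}\,R_s,\overline{\eta}_s)$ is the analogous map at the completion.

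To describe $\text{Ker}\,\phi_s$ explicitly, I would use that finite \'etale covers of $\text{Spec}\,k[[t]]$ correspond bijectively to finite separable extensions of $k$; combined with the description from \ref{kerofgalois} this identifies $\text{Ker}\,\phi_s=\text{Gal}(\overline{K_s}/M_s)$, where $M_s=\bigcup_m m((t))$ and $m$ ranges over finite separable subextensions of $k^{sep}/k$.

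The image in question I would then access via Fact~\ref{closure} applied with $l=\overline{k}$. Setting $L'=\bigcup_m m((t))$, with $m$ now ranging over \emph{all} finite subextensions of $\overline{k}/k$ (including the inseparable ones), one has $\widehat{L'}=\overline{k}((t))$ and $L'$ is henselian, so Claim~\ref{surjective} makes the restriction $\text{Gal}_{\overline{k}((t))}\to\text{Gal}_{L'}$ surjective. Since $L'\subset\overline{K_s}$, chasing definitions identifies the image of $\text{Gal}_{\overline{k}((t))}$ in $\text{Gal}_{k((t))}$ with $\text{Gal}(\overline{K_s}/L')$.

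The one substantive verification that remains, and which I expect to be the main (if minor) obstacle, is the equality $\text{Gal}(\overline{K_s}/L')=\text{Gal}(\overline{K_s}/M_s)$. The inclusion $M_s\subset L'$ gives one direction. For the reverse, given $\alpha\in m((t))\subset L'$, writing $m^{sep}$ for the separable closure of $k$ in $m$, the extension $m/m^{sep}$ is purely inseparable, so $m^{p^N}\subset m^{sep}$ for some $N$; raising coefficients to the $p^N$-th power then yields $\alpha^{p^N}\in m^{sep}((t))\subset M_s$. Hence any $\sigma$ fixing $M_s$ satisfies $\sigma(\alpha)^{p^N}=\alpha^{p^N}$, forcing $\sigma(\alpha)=\alpha$ in characteristic $p$. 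Combined with Proposition~\ref{Kernel}, this yields the description of $\text{Ker}\,\phi$ claimed in the corollary.
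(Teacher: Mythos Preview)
Your proof is correct and follows essentially the same approach as the paper: reduce via Proposition~\ref{Kernel} to describing $\text{Ker}\,\phi_s$, identify the maximal unramified extension of $k[[t]]$ with $\bigcup_{m\subset k^{sep}} m((t))$, and use Fact~\ref{closure} to pass to $\overline{k}((t))$. The only cosmetic difference is that the paper applies Fact~\ref{closure} twice (first to $k^{sep}/k$, then to $\overline{k}/k^{sep}$), with the purely inseparable step hidden in the second application, whereas you apply Fact~\ref{closure} once to $\overline{k}/k$ and make the purely inseparable argument $\text{Gal}(\overline{K_s}/L')=\text{Gal}(\overline{K_s}/M_s)$ explicit; your version is arguably clearer on this point.
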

\begin{proof}
$Proof$: Since $\text{Spec }R$ satisfies the assumption in
Proposition ~\ref{Kernel},  $\text{Ker }\phi$ is generated by
$\psi_s(\text{Ker }\phi_s)$. Apply Fact ~\ref{closure} to the case
when $l=k^{sep}$. Note that $L$ is the maximal unramified algebraic
extension of $X=\text{Spec }k[[t]]$ in the sense of
~\ref{kerofgalois}, and hence  $\text{Ker }\phi_s$
in~\ref{kerofgalois} is the normal subgroup generated by the image
of $Gal_{k^{sep}((t))}\twoheadrightarrow\text{Gal}_L\rightarrow
\text{Gal}_{k((t))}$. Apply Fact~\ref{closure} to the field
extension $\overline{k}/k^{sep}$ to see that
$\text{Gal}_{\overline{k}((t))}\rightarrow\text{Gal}_{k^{sep}((t))}$
is surjective. In conclusion, $\text{Ker }\phi_s$ is the image of
$\text{Gal}_{\overline{k}((t))}\rightarrow\text{Gal}_{k((t))}$.
\end{proof}

\section{Galois Representations Associated to F-crystals of Rank 1}

\subsection{} Consider an $F$-crystal $\mathcal{E}$ of rank $1$
and slope $m$ over $k$. Let $(M,\nabla,F)$ over $\Lambda$ be the
triple  defining the crystal $\mathcal{E}$. If $\{e\}$ is chosen to
be the basis of $M$, then $F(e)=p^m\mu e$, where $\mu$ is a unit in
$\Lambda\subset W(k^{pf})$. By~\ref{cohenwitt}, there exists some
unit $\alpha\in W(\overline{k})$ such that
$F(e\otimes\alpha)=p^me\otimes\alpha$, i.e.
$\sigma(\alpha)\mu=\alpha$. As every $g\in
\text{Gal}(\overline{k}/k)=\text{Gal}(\overline{k}/k^{pf})$ can be
uniquely lifted as a $W(k^{pf})$-automorphism of $W(\overline{k})$,
it is easy to show that $g(\alpha)\alpha^{-1}\in\mathds{Z}_p^*$.
Thus we get a continuous homomorphism $\rho:
\text{Gal}(\overline{k}/k)\rightarrow \mathds{Z}_p^*$ by sending $g$
to $g(\alpha)\alpha^{-1}$. \label{defnofrepr}

\begin{defn}
Let $\mathcal{E}$ be an $F$-crystal over a noetherian integral
scheme $X$ of characteristic $p$. Let $K$ be the fraction field of
$X$ and $\eta$ be the generic point. Assume that the first break
point of $NP(X,\mathcal{E})_\eta$ is $(1,m)$, where
$m\in\mathds{Z}_{\geq0}$. Then by Proposition~\ref{subcrystal},
there exists a unique subcrystal
$\mathcal{E}_1\subset\mathcal{E}_\eta$ of rank $1$ and slope $m$. By
the above discussion we obtain from the crystal $\mathcal{E}_1$  a
continuous homomorphism $\rho: \text{Gal}(\overline{K}/K)\rightarrow
\mathds{Z}_p^*$. We call it \emph{the Galois representation
associated to $\mathcal{E}$}, or \emph{the associated representation
of $\mathcal{E}$}. \label{formaldefinition}
\end{defn}

\subsection{}Let $X$ and $Y$ be noetherian integral schemes.
Let $f:X\rightarrow Y$ be a morphism mapping the generic point of
$X$ to the generic point of $Y$. Assume that $\mathcal{E}$ is a
crystal over $Y$ satisfying the assumption of the definition. Then
the representation associated to $\mathcal{E}|_X$ is the composition
$\text{Gal}_{K(X)}\rightarrow \text{Gal}_{K(Y)}\rightarrow
\mathds{Z}_p^*$.\label{compatiblerepr}

\begin{lem} Let $(\mathcal{E}, F_1)$ and $(\mathcal{E}', F_2)$ be
two $F$-crystals over a noetherian integral scheme $X$ satisfying
the assumptions in Definition~\ref{formaldefinition}. If there
exists an isogeny $\psi:\mathcal{E}\rightarrow\mathcal{E}'$, then
their associated representations are identical. \label{isogenous}
\end{lem}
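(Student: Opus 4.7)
The plan is to reduce the comparison to the generic point of $X$, transport the isogeny $\psi$ to the canonical rank-one, slope-$m$ sub-$F$-crystals produced by Proposition~\ref{subcrystal}, and read off equality of the cocycles $\mu,\mu'$ defining the two representations. Since Newton slopes are isogeny invariants, $\mathcal{E}'$ also has first break point $(1,m)$ at the generic point, so Proposition~\ref{subcrystal} produces unique sub-crystals $\mathcal{E}_1\subset\mathcal{E}$ and $\mathcal{E}'_1\subset\mathcal{E}'$ of rank $1$ and slope $m$; by the compatibility~\ref{compatiblerepr} I may assume $X=\text{Spec}\,K$.

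Inverting $p$ turns $\psi$ into an isomorphism of isocrystals, and $\psi(\mathcal{E}_1\otimes\mathds{Q}_p)$ is an $F$-stable rank-$1$ sub-isocrystal of $\mathcal{E}'\otimes\mathds{Q}_p$ of slope $m$, so by the uniqueness clause of Proposition~\ref{subcrystal} (applied at the isocrystal level, where it is also valid since the lowest slope has multiplicity $1$) it coincides with $\mathcal{E}'_1\otimes\mathds{Q}_p$. Let $\Lambda$ be the Cohen ring of $K$; choose $\Lambda$-bases $e$ of $\mathcal{E}_1$ and $e'$ of $\mathcal{E}'_1$, and write $F(e)=p^m\mu\, e$, $F(e')=p^m\mu'\, e'$, $\psi(e)=\lambda e'$ with $\mu,\mu'\in\Lambda^*$ and $\lambda\in\Lambda[1/p]^*$. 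The identity $F\circ\psi=\psi\circ F$ immediately gives the key relation
$$\mu/\mu'=\sigma(\lambda)/\lambda.$$

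Following~\ref{defnofrepr}, pick $\alpha,\alpha'\in W(\overline{K})^*$ with $\alpha/\sigma(\alpha)=\mu$ and $\alpha'/\sigma(\alpha')=\mu'$, and set $\beta:=\alpha'/(\lambda\alpha)\in W(\overline{K})[1/p]^*$. A short computation using the displayed relation gives $\sigma(\beta)=\beta$; since $\overline{K}$ is algebraically closed and perfect, $W(\overline{K})[1/p]^\sigma=\mathds{Q}_p$, so $\beta\in\mathds{Q}_p^*$. For any $g\in\text{Gal}(\overline{K}/K)$ both $\beta$ and $\lambda$ are Galois-fixed ($\lambda$ because the Cohen ring $\Lambda$ embeds into $W(K^{pf})$ via~\ref{cohenwitt}, and $W(K^{pf})$ lies in the Galois-fixed subring of $W(\overline{K^{pf}})$), so
$$\rho_{\mathcal{E}'}(g)=g(\alpha')/\alpha'=g(\beta\lambda\alpha)/(\beta\lambda\alpha)=g(\alpha)/\alpha=\rho_{\mathcal{E}}(g),$$
which is the desired equality. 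The main technical hurdle will be keeping track of the rings involved, arranging that $\alpha,\alpha',\lambda$ all sit inside one ambient ring on which $\sigma$ and the Galois action are compatibly defined, and in particular verifying the Galois-invariance of $\lambda$ via its presence in the Cohen ring.
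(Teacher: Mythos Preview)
Your argument is correct and follows the same overall strategy as the paper: restrict to the generic point, use uniqueness to see that $\psi$ carries $\mathcal{E}_1$ into $\mathcal{E}'_1$, and then compare the Frobenius eigenvalues on these rank-one pieces.

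The only difference is in the endgame, where you work harder than necessary. The paper observes that since $\Lambda$ is a discrete valuation ring with uniformizer $p$, the scalar $\lambda$ with $\psi(e)=\lambda e'$ can be written as $p^{n}u$ with $u\in\Lambda^{*}$; absorbing $u$ into the basis $e'$ one may take $\lambda=p^{n}$. Then $\sigma(\lambda)=\lambda$, so your relation $\mu/\mu'=\sigma(\lambda)/\lambda$ collapses to $\mu=\mu'$, and the representations visibly agree without ever introducing $\alpha,\alpha',\beta$ or invoking $W(\overline{K})[1/p]^{\sigma}=\mathds{Q}_p$. Your cocycle computation is a perfectly valid substitute, but it is worth noting that the normalization $\lambda=p^{n}$ makes the lemma essentially a one-line observation.
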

\begin{proof}
$Proof$: Let $\mathcal{E}_1$ (resp. $\mathcal{E}'_1$) be the
subcrystal of $(\mathcal{E})_\eta$ (resp. $(\mathcal{E}')_\eta$)
obtained in Proposition ~\ref{subcrystal}. As $\psi\circ
F_1=F_2\circ\psi$, then $\psi(\mathcal{E}_1)\subset\mathcal{E}'_1$.
Actually we can choose a basis $e_i$ of $\mathcal{E}_1$ (resp.
$\mathcal{E}'_1$) so that $\psi(e_1)=p^ne_2$ for some $n\in N$, and
$F_i{e_i}=p^m\mu e_i$ for some unit $\mu\in\Lambda$. Then it is
obvious that the Galois representations associated to $\mathcal{E}$
and $\mathcal{E}'$ are identical.
\end{proof}

\begin{lem}
Let $\mathcal{E}$ be an $F$-crystal of rank 1 and slope
$m\in\mathds{Z}_{\geq0}$ over $S=\text{Spec }k$, where $k$ is a
field of characteristic $p$. If the associated representation is
trivial, then $\mathcal{E}$ is a trivial crystal, i.e. there exists
some basis $\{e\}$ of $\mathcal{E}$ such that $F(e)=p^me$, and
$\nabla(e)=0$. \label{trivialcrystal}
\end{lem}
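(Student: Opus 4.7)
The plan is to translate the crystal into a module-theoretic statement via \ref{modandcry} and then push the auxiliary element $\alpha \in W(\overline{k})^*$ all the way down into $\Lambda^*$. Choose a basis $e_0$ of $M$, so that $F(e_0)=p^m\mu e_0$ for some $\mu\in\Lambda^*$. Recall from \ref{defnofrepr} that the representation was built from $\alpha\in W(\overline{k})^*$ satisfying $\sigma(\alpha)\mu=\alpha$; if we can produce $\beta\in\Lambda^*$ satisfying the same equation $\sigma(\beta)\mu=\beta$, then $e:=\beta e_0$ is a basis of $M$ with $F(e)=p^m e$. Triviality of $\rho$ says $\alpha\in W(\overline{k})^{\text{Gal}(\overline{k}/k^{pf})}=W(k^{pf})^*$, so the real task is to show that this $\alpha$ already lies in the subring $\Lambda\subset W(k^{pf})$.

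This descent from $W(k^{pf})$ to $\Lambda$ is the main obstacle, and I plan to carry it out as a successive approximation. Reducing the equation modulo $p$ gives $\bar\alpha^{p-1}=\bar\mu^{-1}\in k$; since $X^{p-1}-\bar\mu^{-1}$ is separable over $k$ (its degree is prime to $p$), its root $\bar\alpha$ lies in $k^{pf}\cap k^{\text{sep}}=k$. Lift $\bar\alpha$ to $\tilde\beta_0\in\Lambda^*$, and replace $(\alpha,\mu)$ by $(\alpha^{(1)},\mu^{(1)}):=(\alpha/\tilde\beta_0,\ \sigma(\tilde\beta_0)\mu/\tilde\beta_0)$, which still satisfies $\sigma(\alpha^{(1)})\mu^{(1)}=\alpha^{(1)}$ and which now lies in $1+pW(k^{pf})$ and $1+p\Lambda$ respectively. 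Writing $\alpha^{(1)}=1+pa_1$ and $\mu^{(1)}=1+pm_1$, the equation reduces modulo $p$ to the Artin--Schreier equation $\bar a_1^p-\bar a_1+\bar m_1=0$ in $k^{pf}$, which is again separable, so $\bar a_1\in k$. Lifting $\bar a_1$ to $\tilde a_1\in\Lambda$ and dividing $\alpha^{(1)}$ by $1+p\tilde a_1$ brings us to $\alpha^{(2)}\in 1+p^2 W(k^{pf})$; the same computation shows $\mu^{(2)}\in 1+p^2\Lambda$. Iterating produces $\beta_n=\tilde\beta_0\prod_{j=1}^n(1+p^j\tilde a_j)\in\Lambda^*$ with $\alpha\equiv\beta_n\pmod{p^{n+1}W(k^{pf})}$; since $\Lambda$ is $p$-adically closed in $W(k^{pf})$, the limit $\beta=\lim_n\beta_n$ lies in $\Lambda$ and equals $\alpha$, and it is a unit in $\Lambda$ because the inclusion $\Lambda\subset W(k^{pf})$ is a local homomorphism.

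With $e=\beta e_0$ in hand, the statement about $\nabla$ is routine. Write $\nabla(e)=e\otimes\omega$ for some $\omega\in\Omega_\Lambda$. Horizontality $\nabla\circ F=(F\otimes\widetilde\sigma)\circ\nabla$ applied to $e$ gives $p^m e\otimes\omega=p^m e\otimes d\sigma(\omega)$; since $\Omega_\Lambda$ is a free $\Lambda$-module by \cite[1.3.1]{Berthelot-Messing}, it is $p$-torsion-free, so $\omega=d\sigma(\omega)$. Because $\sigma$ lifts Frobenius, $d\sigma$ sends $\Omega_\Lambda$ into $p\Omega_\Lambda$, so iterating forces $\omega\in\bigcap_n p^n\Omega_\Lambda=0$, and hence $\nabla(e)=0$.
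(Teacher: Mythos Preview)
Your proof is correct and follows essentially the same route as the paper's: both descend $\alpha$ from $W(k^{pf})$ to $\Lambda$ by successive approximation, using at each stage that the residue satisfies a separable equation over $k$ while lying in the purely inseparable extension $k^{pf}$, hence lies in $k$. The only difference is that you spell out the argument for $\nabla(e)=0$ via horizontality and $\widetilde\sigma(\Omega_\Lambda)\subset p\Omega_\Lambda$, whereas the paper simply asserts that this ``follows automatically''; your addition is a welcome clarification rather than a departure in strategy.
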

\begin{proof}
$Proof$: Let $e$ be a basis of $\mathcal{E}$, and $F(e)=p^m\mu e$.
From ~\ref{defnofrepr}, there is some unit $\alpha\in
W(\overline{k})$ such that $\sigma(\alpha)\mu=\alpha$; the
associated representation is trivial if and only if the unit
$\alpha\in W(k^{pf})$. It suffices to show that $\alpha\in\Lambda$,
and $\nabla(e)=0$ follows automatically.

Let $U^n(k)=1+p^n\Lambda$ and $U^n(k^{pf})=1+p^nW(k^{pf})$. First
choose $\mu\in U^1(k)$ and $\alpha\in U^1(k^{pf})$. Considering
$\sigma(\alpha)\mu=\alpha(\text{ mod }p)$, we have
$\overline{\alpha}^p \overline{\mu}=\overline{\alpha}$, where
$\overline{\alpha}=(\alpha\text{ mod }p) \in k^{pf}$. Moreover, the
equation implies that $\overline{\alpha}$ is separable over $k$, and
hence $\overline{\alpha}\in k$. Choose $\gamma_0\in\Lambda$ such
that $\gamma_0(\text{ mod }p)=\overline{\alpha}$. Replace the basis
$e$ by $\gamma_0e$, then replace $\mu$ by
$\sigma(\gamma_0)\mu\gamma_0^{-1}$ and $\alpha$ by
$\alpha\cdot\gamma_0^{-1}$. Then $\sigma(\alpha)\mu=\alpha$ still
holds, and $\mu\in U^1(k)$, $\alpha\in U^1(k^{pf})$.

The induction step: Assume $\mu_{n-1}\in U^n(k)$, $\alpha_{n-1}\in
U^n(k^{pf})$, and $\sigma(\alpha_{n-1})\mu_{n-1}=\alpha_{n-1}$. It
suffices to show that there exists some $\gamma_n\in U^n(k)$ such
that $\gamma_n=\alpha_{n-1}(\text{ mod }p^{n+1})$. Write
$\mu_{n-1}=1+p^n\nu_n$, $\alpha_{n-1}=1+p^n\delta_n$ for some
$\nu_n\in\Lambda$, $\delta_n\in W(k^{pf})$. By assumption we have
$$\sigma(\delta_n)+\nu_n=\delta_n (\text{ mod }p)\text{ or } \overline{\delta}_n^p
+\overline{\nu}_n=\overline{\delta}_n$$

As $\overline{\delta}_n\in k^{pf}$, and since the above equation
implies that it is separable over $k$,  $\overline{\delta}_n\in k$.
Hence we can choose $\gamma_n\in U^n(k)$ such that
$\gamma_n=\alpha_{n-1}(\text{ mod }p^{n+1})$.

Then let $\mu_n=\sigma(\gamma_n)\mu_{n-1}\gamma_n^{-1}$ and
$\alpha_{n}=\alpha_{n-1}\cdot\gamma_n^{-1}$. We can easily see that
they satisfy the induction assumptions. Thus we can get a sequence
$\{\gamma_n\in U^n(k)|n\geq1\}$. As $\Lambda$ is complete,
$\prod_{n}\gamma_n$ converges to $\beta\in\Lambda$. It is not hard
to see that $\alpha\cdot\beta^{-1}=1$, and thus $\alpha\in\Lambda$.
\end{proof}

\begin{prop}
Let $R$ be a discrete valuation ring of characteristic $p$ with
fraction field $K$ and residue field $k$. Let $\mathcal{E}$ be an
$F$-crystal over $\text{Spec }R$. Let $\eta$ and $s$ be the generic
and closed point of $\text{Spec }R$. Assume that the first break
point of $NP(\mathcal{E})_\eta$ is $(1,m)$. Then the following two
conditions are equivalent:

(a) the Galois representation associated to $\mathcal{E}$ is
unramified, i.e., it factors through
$\phi:\text{Gal}_K\rightarrow\pi_1(\text{Spec }R)$.

(b) the first break point of $NP(\mathcal{E})_s$ is $(1,m)$.
\label{Trivialrepr}
\end{prop}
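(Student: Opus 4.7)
The plan is to reduce the proposition to an equivalence between the triviality of the character associated to the generic fibre and the existence of a rank-$1$ slope-$m$ subcrystal at the special fibre, both over the base change $R=\overline{k}[[t]]$. After passing to the completion of $R$ (which preserves the Newton polygon at $s$, and by Claim~\ref{surjective} preserves whether the character is unramified), Corollary~\ref{reducekernel} combined with ~\ref{compatiblerepr} translates condition (a) into the triviality of the character associated to $\mathcal{E}|_{\mathrm{Spec}\,\overline{k}((t))}$. Since base change $k\to\overline{k}$ preserves Newton polygons, we may take $R=\overline{k}[[t]]$. Invoking Lemma~\ref{trivialcrystal} at the generic fibre and Proposition~\ref{subcrystal} at the closed fibre, the proposition becomes
\[
\mathcal{E}_1|_{\mathrm{Spec}\,\overline{k}((t))}\text{ is trivial as an F-crystal} \iff \mathcal{E}|_s\text{ admits a rank-}1\text{ slope-}m\text{ subcrystal.}
\]

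Direction $(\Leftarrow)$: Starting from the rank-$1$ slope-$m$ subcrystal at $s$ and its analogue $\mathcal{L}_\eta$ at $\eta$, I would run the intersection-of-images construction from the proof of Proposition~\ref{subcrystal} over the Cohen ring $W(\overline{k})[[t]]$: the slope estimate $F^\nu(M_2)\subset p^{m\nu+1}M_2$ for the complement $M_2$ of the slope-$m$ direction holds at both fibres of $\overline{k}[[t]]$, so the modules $E_n=\bigcap_{\nu\gg0}\langle\mathrm{Im}(f^\nu\bmod p^n)\rangle$ are free of rank $1$ over $W(\overline{k})[[t]]/p^n$, yielding an $F$-stable rank-$1$ submodule $M'_1=W(\overline{k})[[t]]\cdot v\subset M_R$ of slope $m$ ($\nabla$-stability follows from the slope argument at the end of that proof). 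Writing $F(v)=p^m\mu v$ with $\mu\in W(\overline{k})[[t]]^\times$, I would then solve $\sigma(\alpha)\mu=\alpha$ in $W(\overline{k})[[t]]^\times$: modulo $p$ this reduces to $\overline{\alpha}^{p-1}=\overline{\mu}^{-1}$ in $\overline{k}[[t]]^\times$, soluble because $\overline{k}$ is algebraically closed (extract a $(p-1)$-th root of the constant term, then use formal expansion in higher orders of $t$); successive lifts modulo $p^n$ proceed as in Lemma~\ref{trivialcrystal}. Since $\alpha$ lies in $W(\overline{k})[[t]]$, it is fixed by $\mathrm{Gal}_{\overline{k}((t))}$ and the associated character is trivial.

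Direction $(\Rightarrow)$: Assume the generic character is trivial, so by Lemma~\ref{trivialcrystal} there is a basis $e$ of $\mathcal{L}_\eta$ with $F(e)=p^me$. I would form the saturation $\mathcal{L}=\mathcal{L}_\eta\cap M_R$; this is a free rank-$1$ $F$- and $\nabla$-stable submodule of $M_R$ (freeness uses that $W(\overline{k})[[t]]$ is a two-dimensional regular local UFD, so rank-$1$ reflexive submodules are free). Writing $\mathcal{L}=W(\overline{k})[[t]]\cdot v$ and $F(v)=p^m\mu v$ with $\mu\in W(\overline{k})[[t]]$, the nondegeneracy of $F$ combined with the injectivity of $F$ on the $p$-torsion-free $M_s$ forces $\overline{\mu}\ne 0$ in $W(\overline{k})$; the triviality of the generic trivialisation (and not merely the existence of $\mathcal{L}_\eta$), together with the unit decomposition $\widetilde{A}^\times=t^{\mathbb{Z}}\cdot W(\overline{k})[[t]]^\times\cdot(1+p\widetilde{A})$ of the Cohen ring $\widetilde{A}$ of $\overline{k}((t))$, upgrades this to $\mu\in W(\overline{k})[[t]]^\times$. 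Reducing $\mathcal{L}$ modulo $t$ produces a rank-$1$ slope-$m$ subcrystal of $\mathcal{E}|_s$, and Newton polygon semicontinuity then pins down the first break at $s$ as $(1,m)$.

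The main obstacle is twofold: first, verifying that the intersection-of-images construction of Proposition~\ref{subcrystal} descends to the two-dimensional base $W(\overline{k})[[t]]$ with uniformity across both fibres; second, upgrading the nondegeneracy-driven nonvanishing $\overline{\mu}\ne 0$ in direction $(\Rightarrow)$ to the unit statement $\mu\in W(\overline{k})[[t]]^\times$, where the algebraic closedness of $\overline{k}$ and the full strength of the triviality hypothesis enter. Once $\mathcal{L}$ and $\mu$ are under control, both directions collapse to the Artin--Schreier lifting argument from Lemma~\ref{trivialcrystal}.
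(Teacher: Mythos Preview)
Your reduction to $R=\overline{k}[[t]]$ and the reformulation of (a) as triviality of the character both match the paper. From that point on, however, the paper takes a different route, and the two obstacles you flag are not loose ends but the essential content of the argument.

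For $(b)\Rightarrow(a')$ the paper does not rerun the intersection-of-images construction over $W(\overline{k})[[t]]$. It cites Katz \cite[Corollary~2.6.2]{Katz} to pass to an isogenous crystal $\mathcal{E}'$ divisible by $p^m$, which then visibly contains a rank-$1$ slope-$m$ subcrystal $\mathcal{E}'_1$ over all of $\mathrm{Spec}\,R$; by Lemma~\ref{isogenous} the character is unchanged under isogeny, and by Katz \cite[Theorem~2.7.4]{Katz} the rank-$1$ crystal $\mathcal{E}'_1$ becomes constant over $\overline{k}((t))^{pf}$, so the character is trivial. Your uniformity obstacle is exactly what these slope-filtration theorems absorb; redoing them by hand over the two-dimensional base is possible in principle but is strictly harder than quoting them.

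For $(a')\Rightarrow(b)$ the paper invokes de~Jong's full-faithfulness theorem \cite[Main Theorem]{de Jong}: the map $\Phi\colon\mathcal{L}_{\mathrm{Spec}\,K}\hookrightarrow\mathcal{E}_{\mathrm{Spec}\,K}$ from the trivial rank-$1$ slope-$m$ crystal extends to a nonzero morphism $\mathcal{L}\to\mathcal{E}$ over $\mathrm{Spec}\,R$, and restriction to $s$ together with Grothendieck specialisation finishes. Your saturation strategy attempts to bypass this, but notice that the construction of $\mathcal{L}=\mathcal{L}_\eta\cap M_R$, its freeness, its $F$-stability, and the injection $\mathcal{L}_s\hookrightarrow M_s$ use only the \emph{existence} of $\mathcal{L}_\eta$, not the triviality hypothesis. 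Run verbatim on a rank-$2$ crystal over $\overline{k}[[t]]$ with generic slopes $0,1$ and special slopes $\tfrac12,\tfrac12$ (such crystals exist), those same steps would produce a rank-$1$ $F$-stable line inside an isoclinic slope-$\tfrac12$ Dieudonn\'e module over $W(\overline{k})$; but a parity count on $p$-adic valuations shows that module is simple. So at least one intermediate claim (the compatibility of $\sigma^*$ with reduction mod $t$, or the nondegeneracy argument forcing $c(0)\ne 0$ on the saturated line) must already fail in general, and the unit-decomposition heuristic for $\widetilde{A}^\times$ does not repair it. De~Jong's theorem is doing genuine work here that your outline does not replace.
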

\begin{proof}
$Proof$: First consider $\text{Spec
}\overline{k}[[t]]\rightarrow\text{Spec }R$. By
Corollary~\ref{reducekernel} and ~\ref{compatiblerepr}, Condition
$(a)$ is equivalent to the triviality of the associated
representation of $\mathcal{E}|_{\text{Spec }\overline{k}[[t]]}$.
Moreover, as the Newton polygons of $E$ are preserved after pulled
back to $\text{Spec }\overline{k}[[t]]$,  Condition $(b)$ holds if
and only if the first break point of $NP(\mathcal{E}|_{\text{Spec
}\overline{k}[[t]]})_s$ is $(1,m)$. Hence it suffices to prove the
proposition for $R=k[[t]]$ with $k$ algebraically closed. Note that
in this case (a) is equivalent to the following (a)' the Galois
representation associated to $\mathcal{E}$ is trivial.

Condition (b)$\Rightarrow$ (a)':by \cite[Corollary 2.6.2]{Katz},
$\mathcal{E}$ is isogenous to an $F$-crystal $\mathcal{E}'$ which is
divisible by $p^m$, which contains a subcrystal $\mathcal{E}_1'$ of
rank $1$ and slope $m$. By Lemma~\ref{isogenous}, the Galois
representation in question is the same as the one associated to
$\mathcal{E}_1'|_{\text{Spec }K}$. By \cite[Theorem 2.7.4]{Katz},
$\mathcal{E}_1'$ becomes isogenous to a constant $F$-crystal over
$k((t))^{pf}$, and therefore the associated representation is
trivial.

(a)'$\Rightarrow$(b): By Lemma ~\ref{trivialcrystal},
$\mathcal{E}_{\text{Spec }K}$ has a trivial subcrystal of rank $1$
and slope $m$. Then we get an injection $\Phi:
\mathcal{L}_{\text{Spec} K}\rightarrow {\mathcal{E}}_{\text{Spec
}K}$, where $\mathcal{L}$ is a trivial $F$-crystal of rank $1$ and
slope $m$ over $\text{Spec }R$. Apply \cite[Main Theorem]{de Jong}
to $\mathcal{E}, \mathcal{L}$ and $\Phi$. We obtain a nontrivial map
$\mathcal{L}\rightarrow\mathcal{E}$. Restricting to $s$,we see that
$\mathcal{E}_s$ contains a subcrystal of rank $1$ and slope $m$. On
the other hand, by Grothendieck's specialization theorem
\cite[2.3.1]{Katz}, $NP(\mathcal{E})_s$ lies on or above
$NP(\mathcal{E})_\eta$. Hence $(1,m)$ is the first break point of
$NP(\mathcal{E})_s$.
\end{proof}

\section{The proof}
Assume the common break point is $P=(i,m)$. If we assume that
codim$(U, \{s\}^-)>1$, then we just need to show that $P$ is also a
break point of $NP(\mathcal{E})_s$.

Step $1$: Reduce to the special case when the common break point $P$
is of the form $(1,m)$.

In the general case, let $\mathcal{E}'=\wedge^i\mathcal{E}$. By
assumption, $(1,m)$ is the first break point of $NP(\mathcal{E}')_x$
for all $x\in U\backslash\{s\}$. Applying the result for the special
case, we obtain that $(1,m)$ is a break point of
$NP(\mathcal{E}')_s$, and hence $P$ is a break point of
$NP(\mathcal{E})_s$.

Step $2$: First as $S$ is locally noetherian, we may shrink $S$ to
an open affine neighborhood $\text{Spec }A$ of $s$ such that
$(\text{Spec }A\backslash\{s\})\subset U$. Then we follow the same
reduction steps as in the proof of \cite[Theorem 4.1]{dejong-oort}.
We obtain that there exists a Noetherian  complete local normal
domain $A$ of dimension 2 with algebraically closed residue field
$k$ and a morphism $\phi:\text{Spec }A\rightarrow S$  that maps
closed point to $s$ and other points into $U$. Hence it suffices to
prove the statement when $S$ is the spectrum of a Noetherian
complete local normal domain $A$ of dimension 2 with algebraically
closed residue field $k$, $s$ is the closed point and
$U=S\backslash\{s\}$.

Up to now, we have shown that it suffices to prove the following
simplified statement: \emph{let $A$ be a  Noetherian complete local
normal integral domain of dimension 2 with algebraically closed
residue field $k$. Let $s\in S=\text{Spec }A$ be the closed point
and $U=S\backslash\{s\}$. If $(1,m)$ is the first break point of
$NP(\mathcal{E})_x$ for every $x\in U$, then $(1,m)$ is the first
break point of $NP(\mathcal{E})_s$.}

Let $K$ be the fraction field of $A$. Consider the Galois
representation  $ \rho: \text{Gal}(\overline{K}/K)\rightarrow
\mathds{Z}_p^*$ defined in ~\ref{formaldefinition}. Let H be the
kernel of the composition of $\rho$ and
$\xymatrix{\mathds{Z}_p^*\ar[r]^{\text{mod }p}&\mathds{F}^*_p}$.
 Let $L$ be the subfield of $\overline{K}$ fixed
by $H$. As $\text{Gal}(\overline{K}/K)/H$ is a finite set, $L$ is a
finite Galois extension of $K$. Let $\widetilde{A}$ be the integral
closure of $A$ in $L$. By a standard argument, we see that
$\widetilde{A}$ is a Noetherian complete local normal domain of
dimension 2 with residue field $k$. Consider the finite morphism
$\text{Spec }\widetilde{A}\rightarrow\text{Spec }A$. It is not hard
to see that we only need to prove the statement for $\widetilde{A}$.
Replacing $A$ by $\widetilde{A}$, we may assume that
$\text{Im}(\rho)\subset 1+p\mathbb{Z}_p$ and that the homomorphism
$\xymatrix{\text{Gal}(\overline{K}/K)\ar[r]^\rho&\mathds{Z}_p^*
\ar[r]^{\text{log}}&\mathds{Z}_p}$ is valid.

Let $x\in U$. Assume that $\phi,\ \phi_x$ and $\psi_x$ are the same
as in ~\ref{Kernel}. Since $\mathcal{E}|_{\text{Spec
}\widehat{\mathcal{O}}_{U,x}}$ satisfies the assumptions and
Condition (b) in Proposition~\ref{Trivialrepr}, the associated
representation of $\mathcal{E}|_{\text{Spec
}\widehat{\mathcal{O}}_{U,x}}$, which is the composition of $\psi_x$
and $\rho$,  factors through $\phi_x$. It follows that
$\psi_x(\text{Ker}\phi_x)\subset\text{Ker}\rho$. By
Proposition~\ref{Trivialrepr}, we obtain that $\rho$ factors through
$\phi$. Thus we obtain a map
$\iota:\pi_1(U,\overline{\eta})\rightarrow\mathds{Z}_p^*\rightarrow\mathds{Z}_p$.

Take a resolution of singularities $\widetilde{S}\rightarrow S$; if
$A$ happens to be regular, let $\widetilde{S}$ be the blowup of the
special point of Spec $A$. Then the main result of \cite[Section
3]{dejong-oort} implies that $\iota$ can be extended to
$\widetilde{\iota}:\pi_1(\widetilde{S},\overline{\eta})\rightarrow
\mathds{Z}_p$. Let $\xi$ denote the generic point of a component of
the exceptional fibers of $\widetilde{S}\rightarrow S$.  Now we have
the following diagrams:$$\xymatrix{\text{Spec }K\ar[d]\ar[r]
&U\ar[d]&
&\text{Gal}(\overline{K}/K)\ar[d]\ar[r]\ar@{.>}[drr]&\pi_1(U,\overline{\eta})\ar[d]& \\
\text{Spec }\mathcal{O}_{\widetilde{S},\xi}\ar[r]
&\widetilde{S}&&\pi_1(\text{Spec
}\mathcal{O}_{\widetilde{S},\xi},\overline{\eta})\ar[r] &
\pi_1(\widetilde{S},\overline{\eta})\ar[r]&\mathds{Z}_p^*}$$
\begin{proof}
By definition the representation associated to
$\mathcal{E}_{\text{Spec }\mathcal{O}_{\widetilde{S},\xi}}$ is the
dotted arrow, and it is unramified  by the above commutative
diagram. By Proposition ~\ref{Trivialrepr} again, $(1,m)$ is the
first break point of $NP(\widetilde{S},\mathcal{E})_\xi$. Since
$\xi$ is mapped to $s$, $(1,m)$ is thus the first break point of
$NP(\mathcal{E})_s$.
\end{proof}

\Addresses
\end{document}